\documentclass{article}                
\usepackage{graphicx}
\usepackage{mathptmx}      
\usepackage{lineno,hyperref}
\modulolinenumbers[5]
\usepackage{amsmath}
\usepackage{amstext}
\usepackage{amsfonts}
\usepackage{amssymb}
\usepackage{amsthm}
\usepackage{mathtools}
\usepackage{authblk}
\usepackage{natbib}
\usepackage{titlesec}
\titleformat*{\section}{\large\bfseries}
\newtheoremstyle{MyTheorem}
{\topsep}       
{\topsep}       
{\itshape}  
{}          
{\bfseries}  
{.}         
{5pt plus 1pt minus 1pt}      
{}          
\theoremstyle{MyTheorem}
\newtheorem{theorem}{Theorem}
\newtheorem{cond}[theorem]{Condition}
\newtheorem{observ}[theorem]{Observation}
\newtheorem{remark}[theorem]{Remark}
\newcommand\IPst[0]{\text{s.t.\;\,}}
\newcommand\IPmin[0]{\min\;}

\newcommand\cref[1]{(\ref{#1})}
\usepackage{etoolbox}
\apptocmd\normalsize{%
\setlength{\abovedisplayskip}{6pt plus 3pt minus 4pt}
\setlength{\belowdisplayskip}{6pt plus 3pt minus 4pt}
}{}{\undefined}
\begin{document}
\title{Compact Linearization for Binary Quadratic Programs Comprising Linear Constraints}
\author{Sven Mallach}
\affil{Institut f\"ur Informatik \authorcr Universit\"at zu K\"oln, 50923 K\"oln, Germany}
\date{August 23, 2018}
\maketitle
\begin{abstract}
In this paper, the compact linearization approach originally
proposed for binary quadratic programs with assignment constraints
is generalized to such programs with arbitrary linear equations and
inequalities that have positive coefficients and right hand sides.
Quadratic constraints may exist in addition, and the technique may
as well be applied if these impose the only nonlinearities, i.e., the
objective function is linear. We present special cases of linear
constraints (along with prominent combinatorial optimization problems
where these occur) such that the associated compact
linearization yields a linear programming relaxation that is provably
as least as strong as the one obtained with a classical linearization
method. Moreover, we show how to compute a compact linearization
automatically which might be used, e.g., by general-purpose
mixed-integer programming solvers.
\end{abstract}
\section{Introduction}\label{s:Intro}
In this paper, we consider a class of binary quadratic programs (BQPs)
that comprise a set of linear equations with positive coefficients
and right hand sides. More formally, we study mixed-integer non-linear
programs, that can (after applying some method of linearization to
realize the identities~(\ref{bqp:bilinear})) be
stated in the following general form that covers several NP-hard
combinatorial optimization problems including, e.g., the quadratic
assignment problem and the quadratic traveling salesman problem:
\begin{align}
\IPmin  &  c^Tx + d^Ty                       &      &            && \nonumber \\
\IPst   &  C x + D y                         &\ge\; &  e         && \label{bqp:linearized} \\
        &  \sum_{i \in A_k} \alpha_i^k x_{i} &=\;   &  \beta^k   && \mbox{for all } k \in K_E   \label{bqp:eqn} \\
        &  \sum_{i \in A_k} \alpha_i^k x_{i} &\le\; &  \beta^k   && \mbox{for all } k \in K_I   \label{bqp:ineq} \\
	&  y_{ij}                            &=\;   & x_i x_j    && \mbox{for all } (i, j) \in P \label{bqp:bilinear} \\
	&  x_i                               &\in\; & \{0,1\}    && \mbox{for all } i \in N   \nonumber
\end{align}
Here, the set of original binary variables $x$ in the BQP is indexed by
a set $N = \{ 1, \dots, n\}$ where $n \in \mathbb{N}^{>0}$.
Without loss of generality, we assume all bilinear terms $x_i x_j$
to be collected in an ordered set ${P \subset N \times N}$ such
that $i \le j$ for each ${(i,j) \in P}$. These are permitted to
occur in the objective function as well as in the set of
constraints, i.e., there may be an arbitrary set of $m \ge 0$
equations or inequalities that can be brought into the
form~(\ref{bqp:linearized}) after linearization and
where $C \in \mathbb{R}^{m \times n}$ and ${D \in \mathbb{R}^{m \times |P|}}$.
Apart from these, the BQP shall comprise a nonempty collection $K \coloneqq K_E \cup K_I$
of linear equations (\ref{bqp:eqn}) and linear inequalities (\ref{bqp:ineq}), where, for
each $k \in K$, $A_k$ is an index set specifying the binary variables on the
left hand side, $\alpha_i^k \in \mathbb{R}^{>0}$
for all $i \in A_k$, and ${\beta^k \in \mathbb{R}^{>0}}$.
The central subject of this paper
is a specialized and compact technique to implement the relations~(\ref{bqp:bilinear})
for this particular type of a BQP. In this context, \lq compact\rq\ means that the
approach typically involves the addition of significantly less constraints to the
problem formulation compared to the well-known and widely applied
linearization method by \citet{GloverWoolsey74} where relations~(\ref{bqp:bilinear})
are implemented using a variable $y_{ij} \in [0,1]$ and three constraints:
\begin{align}
  y_{ij}  &\le\;  x_{i} && \label{pck:stdlin1} \\
  y_{ij}  &\le\;  x_{j} && \label{pck:stdlin2} \\
  y_{ij}  &\ge\;  x_{i} + x_{j} - 1 && \label{pck:stdlin3}
\end{align}
The only prerequisite to apply the technique being the subject
of this paper is that, for each product $x_i x_j$, there need
to exist indices $k, \ell \in K$ such that $i \in A_k$ and ${j \in A_\ell}$, i.e., for
each variable being part of a bilinear term, there must be some linear
equation~(\ref{bqp:eqn}) or inequality~(\ref{bqp:ineq}) involving it.
It is however soft in the sense that single products not adhering to this
requirement do not affect a consistent linearization of those that do, and
these could still be linearized using, e.g., the method just described. 
Moreover, if the prerequisite is satisfied already by considering only
the present equations $K_E$, inequalities $K_I$, or a subset of them,
then it might be attractive to apply the technique only to these in order
to obtain a strong continuous relaxation.
Originally, the technique was proposed
by \citet{Liberti2007} and revised by \citet{Mallach2017}
for the case where the only equations (\ref{bqp:eqn}) considered
are assignment (or rather \lq single selection\rq)
constraints, i.e., where for all $k \in K_E$ one has
$\beta^k = 1$ and $a^k_i = 1$ for all~$i \in A_k$.
In this paper, we show that the underlying methodology of the compact
linearization technique can be generalized to BQPs of the form displayed
above. We then~investigate under which circumstances the
linear programming relaxation of the obtained formulation is provably
as least as strong as when using the linearization by \citet{GloverWoolsey74}.
Moreover, we highlight some prominent combinatorial optimization
problems where previously found mixed-integer programming formulations appear
now as particular compact linearizations. Last but not
least, we show how these can be computed automatically,
e.g., as part of a general-purpose mixed-integer programming solver.
\section{Related Linearization Methods for BQPs}
Since linearizations of quadratic and, more generally, polynomial programming
problems, enable the application of well-studied mixed-integer linear programming
techniques, they have been an active field of research since the 1960s.
The seminal idea to model binary conjunctions using additional (binary) variables
and inequalities~(\ref{pck:stdlin3}) combined with the inequalities
$x_i + x_j - 2y_{ij} \ge 0$ is attributed to~\citet{Fortet59,Fortet60}, and discussed
in several succeeding books and papers, e.g.~by \citet{Balas64}, \citet{Zangwill}, \citet{Watters},
\citet{HammerRudeanu}, and by \citet{GloverWoolsey73}.
Shortly thereafter, \citet{GloverWoolsey74} found
that an explicit integrality requirement on $y_{ij}$ becomes obsolete when replacing
the mentioned inequality with~(\ref{pck:stdlin1}) and~(\ref{pck:stdlin2}). This
method, that is henceforth referred to as the \lq Glover-Woolsey linearization\rq, is
until today regarded as \lq the standard linearization technique\rq.
Together with $y_{ij} \ge 0$, the Glover-Woolsey linearization appears as a special
case of the convex envelopes for general nonlinear programming
problems as proposed by~\cite{McCormick1976}.
Moreover, \cite{Padberg1989} proved the corresponding four inequalities to be
facet-defining for the polytope associated to unconstrained binary quadratic
optimization problems:
$$QP^n = conv \{ (x,y) \in \mathbb{R}^n \times \mathbb{R}^{n(n-1)/2} \mid x \in \{0,1\}^n,\; y_{ij} = x_i x_j \mbox{ for all } 1 \le i < j \le n \}$$
While the Glover-Woolsey linearization is always applicable, its inequalities do
not couple related linearization variables (i.e., such sharing a common factor)
if these are present. Depending on the concrete problem to be solved, this may
result in rather weak linear programming relaxations.
Refined techniques 
are however (almost) exclusively available for BQPs with no or only linear
constraints. This is true, e.g., for the posiform-based techniques by \citet{HansenMeyer2009},
the \lq Clique-Edge Linearization\rq\ by \citet{GueyeMichelon2009}, the
\lq Extended Linear Formulation\rq\ by \citet{FuriniTraversi}, as well as 
earlier ones by \citet{Glover75}, \citet{OralKettani92a,OralKettani92b},
\citet{CPP2004}, and \citet{SheraliSmith2007}.
An exception is the well-known transformation between unconstrained
binary quadratic optimization and the maximum cut problem (cf.~\cite{Hammer1965,DeSimone90})
that, in principle, also allows for the translation of a (possibly quadratic)
constraint set.
The only other general methodology to convert quadratically constrained BQPs
into mixed-integer linear programs is, to the best of our knowledge, the
reformulation-linearization technique (RLT) by~\citet{SheraliAdamsRLTBook}.
The compact linearization technique
can be interpreted as a particular and usually incomplete
(or \lq sparse\rq) first level application of the RLT, and
establishes a practical and general approach to linearize
an important and rich subclass of BQPs.
\section{Compact Linearization}\label{s:CmpLin}
The compact linearization approach for binary quadratic
problems with linear constraints is as follows.
With each linear equation of type (\ref{bqp:eqn}),
i.e., with its index set $A_k$, we associate a
corresponding index set $B^E_k \subseteq N$, and with
each linear inequality of type (\ref{bqp:ineq}),
we associate two such index sets
$B^{I_+}_k \subseteq N$ and $B^{I_-}_k \subseteq N$.
For ease of subsequent reference, let
$B_k \coloneqq B^E_k$ if $k \in K_E$,
$B_k \coloneqq B^{I_+}_k \cup B^{I_-}_k$ if $k \in K_I$,
and $B \coloneqq \bigcup_{k \in K} B_k$.
For~each $j \in B^E_k \cup B^{I_+}_k$, we then
multiply the respective equation or inequality by $x_j$,
and for each $j \in B^{I_-}_k$, we multiply the respective
inequality by $1 - x_j$.
We thus obtain the new~constraints:
\begin{align}
    \sum_{i \in A_k} \alpha_i^k x_i x_j     &=\;  \beta^k x_{j} && \mbox{for all } j \in B^E_k,\; \mbox{for all } k \in K_E \label{cmp:eqn0}\\
    \sum_{i \in A_k} \alpha_i^k x_i x_j     &\le\;  \beta^k x_{j} && \mbox{for all } j \in B^{I_+}_k,\; \mbox{for all } k \in K_I \label{cmp:ineq+0}\\
    \sum_{i \in A_k} \alpha_i^k x_i (1 - x_j) &\le\;  \beta^k (1 - x_{j}) && \mbox{for all } j \in B^{I_-}_k,\; \mbox{for all } k \in K_I \label{cmp:ineq-0}
\end{align}
Each product $x_i x_j$ induced by any of these new
equations or inequalities is then
replaced by a continuous linearization variable $y_{ij}$
(if $i \le j$) or $y_{ji}$ (otherwise). We denote the set
of bilinear terms created this way with
$$Q = \{ (i,j) \mid i \le j \mbox { and } \exists k \in K: i \in A_k \mbox { and } j \in B_k \mbox{, or } j \in A_k
\mbox { and } i \in B_k \}.$$
Rewriting \cref{cmp:eqn0}--\cref{cmp:ineq-0} using $Q$, we obtain the linearization constraints:
\begin{align}
    \sum_{\mathclap{i \in A_k, (i,j) \in Q}} \alpha_i^k y_{ij} \;\; + \;\; \sum_{\mathclap{i \in A_k, (j,i) \in Q}} \alpha_i^k y_{ji}    &=\; \beta^k x_{j} && \mbox{for all } j \in B^E_k,\; \mbox{for all } k \in K_E \label{cmp:eqn1} \\
    \sum_{\mathclap{i \in A_k, (i,j) \in Q}} \alpha_i^k y_{ij} \;\; + \;\; \sum_{\mathclap{i \in A_k, (j,i) \in Q}} \alpha_i^k y_{ji}    &\le\; \beta^k x_{j} && \mbox{for all } j \in B^{I_+}_k,\; \mbox{for all } k \in K_I \label{cmp:ineq+1} \\
    \sum_{\mathclap{\;\;\;\; i \in A_k, (i,j) \in Q}} \alpha_i^k (x_i - y_{ij})  + \; \sum_{\mathclap{\;\;\;\;i \in A_k, (j,i) \in Q}} \alpha_i^k (x_i - y_{ji})  &\le\; \beta^k (1 - x_{j}) \!\!\!\! && \mbox{for all } j \in B^{I_-}_k,\; \mbox{for all } k \in K_I \label{cmp:ineq-1}
\end{align}
It is clear that the constraints (\ref{cmp:eqn0})--(\ref{cmp:ineq-0})
are valid for the original problem and so are thus as well the
constraints (\ref{cmp:eqn1})--(\ref{cmp:ineq-1}) whenever the
introduced linearization variables take on consistent values with
respect to their two original counterparts, i.e.,
$y_{ij} = x_i x_j$ holds for all $(i, j) \in Q$.
Since the original problem formulation comprises the bilinear
terms defined by the set $P$,
we need to choose the set $B$ such that the induced set of
variables $Q$ will be equal to $P$ or contain $P$ as a subset. We
will discuss how to determine such a set $Q \supseteq P$ in
Sect.~\ref{s:Compute}, but suppose for now that it is already
at hand. We will show that a consistent linearization is obtained
if and only if the following three conditions are satisfied:
\begin{cond} \label{cond:c1}
For each $(i,j) \in Q$, there is a $k \in K$
such that $i \in A_k$ and $j \in B^E_k \cup B^{I_+}_k$.
\end{cond}
\begin{cond} \label{cond:c2}
For each $(i,j) \in Q$, there is an $\ell \in K$
such that $j \in A_\ell$ and $i \in B^E_\ell \cup B^{I_+}_\ell$.
\end{cond}
\begin{cond} \label{cond:c3}
For each $(i,j) \in Q$, there is a $k \in K$
such that $i \in A_k$ and $j \in B^E_k \cup B^{I_-}_k$
\emph{or} an $\ell \in K$ such that $j \in A_\ell$
and $i \in B^E_\ell \cup B^{I_-}_\ell$.
\end{cond}
Importantly, $k = \ell$ is a valid choice for
satisfying Conditions~\ref{cond:c1} and
\ref{cond:c2}, and Condition~\ref{cond:c3} is
implicitly satisfied whenever
Condition \ref{cond:c1} 
\emph{or} Condition
\ref{cond:c2} is established 
using an equation.
In particular, Condition~\ref{cond:c3} is obsolete
if only linear equations~(\ref{bqp:eqn}) but no
inequalities~(\ref{bqp:ineq}) are present
in the program to be linearized.
\begin{theorem} \label{thm:main}
For any integer solution $x \in \{0,1\}^n$, the
linearization constraints~(\ref{cmp:eqn1})--(\ref{cmp:ineq-1})
imply $y_{ij} = x_i  x_j$ for all $(i,j) \in Q$ if and only if
Conditions~\ref{cond:c1}--\ref{cond:c3} are satisfied.
\end{theorem}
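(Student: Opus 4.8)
The statement is an equivalence, so I would prove the two implications separately, throughout fixing an integer point $x \in \{0,1\}^n$ that is \emph{feasible} for the original linear constraints \cref{bqp:eqn}--\cref{bqp:ineq}, and working with the bounds $0 \le y_{ij} \le 1$ attached to each linearization variable. (Feasibility of $x$ is indispensable: a short example with $x_1+x_2=1$ multiplied by $x_1$ and $x_2$ shows that at the infeasible point $x=(1,1)$ an inconsistent $y$ survives even though all three conditions hold.) The starting observation, already noted in the text, is that the canonical assignment $y^\ast_{ij} \coloneqq x_i x_j$ satisfies \cref{cmp:eqn1}--\cref{cmp:ineq-1}; hence consistency for this $x$ is equivalent to $y^\ast$ being the \emph{only} feasible $y$, and I phrase both directions around pinning, respectively unpinning, a single coordinate $y_{ij}$ (writing $y_{\{i',j\}}$ for whichever of $y_{i'j},\, y_{ji'}$ occurs).

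For sufficiency I argue in two phases, and their order is essential. \emph{Phase 1 (forcing zeros).} Fix $(i,j) \in Q$ with $x_i x_j = 0$. If $x_j = 0$, Condition~\ref{cond:c1} supplies a $k$ with $i \in A_k$ and $j \in B^E_k \cup B^{I_+}_k$; the associated constraint \cref{cmp:eqn1} or \cref{cmp:ineq+1} has right-hand side $\beta^k x_j = 0$, and since every $\alpha^k_{i'}>0$ and every $y \ge 0$, the whole left-hand sum vanishes, in particular $y_{ij}=0$. If instead $x_i = 0$, Condition~\ref{cond:c2} gives the symmetric argument. \emph{Phase 2 (forcing ones).} Now fix $(i,j)\in Q$ with $x_i=x_j=1$ and invoke Condition~\ref{cond:c3}. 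If it is met through $B^E$, equation \cref{cmp:eqn1} for the appropriate multiplier, together with the original equation \cref{bqp:eqn} and the Phase-1 zeros, reduces to $\sum_{i':x_{i'}=1}\alpha^k_{i'}(y_{\{i',j\}}-1)=0$; each summand is nonpositive under $y \le 1$, so every such $y$ equals $1$, in particular $y_{ij}=1$. If it is met through $B^{I_-}$, constraint \cref{cmp:ineq-1} has right-hand side $\beta^k(1-x_j)=0$, and after discarding the Phase-1 zeros the same $y \le 1$ argument applies. I would emphasise that the bound $y \le 1$ is genuinely needed here: for non-unit $\beta^k$ with several active terms, $y \ge 0$ alone does not pin the ones.

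For necessity I prove the contrapositive: if a Condition fails at some $(i,j)\in Q$, I exhibit a feasible integer $x$ in the regime that tests the relevant product value, together with a feasible $y \ne y^\ast$ differing at $(i,j)$. If Condition~\ref{cond:c3} fails, I take $x$ with $x_i=x_j=1$ and \emph{decrease} $y_{ij}$ below $1$ while leaving all other coordinates at $y^\ast$: the failure removes every $B^E$- and $B^{I_-}$-constraint on $y_{ij}$, so the only surviving ones are of $B^{I_+}$-type \cref{cmp:ineq+1}, where $y_{ij}$ has positive coefficient and lowering it merely slackens; feasibility is immediate. If Condition~\ref{cond:c1} fails, I take $x$ with $x_i=1,\, x_j=0$ and try to \emph{increase} $y_{ij}$ above $0$: the failure removes every $B^E$/$B^{I_+}$-constraint that would pin $y_{ij}$ to $0$, leaving only $B^{I_-}$-constraints (which slacken) and the constraints of a possibly present Condition~\ref{cond:c2} (an equality \cref{cmp:eqn1}, or a tight inequality \cref{cmp:ineq+1}). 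In the equality case, $\sum_{i'\in A_k}\alpha^k_{i'}y_{\{i',i\}}=\beta^k$ with $x_j=0$ forces some other $i^\ast\in A_k$ with $x_{i^\ast}=1$, hence $y^\ast_{\{i^\ast,i\}}=1$, and I offset the increase of $y_{ij}$ by a compensating decrease of $y_{\{i^\ast,i\}}$ preserving the equation. The failing Condition~\ref{cond:c2} is symmetric.

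The main obstacle is exactly this compensation step. Lowering $y_{\{i^\ast,i\}}$ is harmless for $\le$-constraints but may disturb a \emph{further} equality containing it, so one must argue that a globally feasible perturbation direction $d$ with $d_{ij}\ne 0$ truly exists rather than cascading into infeasibility. I would formalise this by passing to the subsystem of constraints active at $y^\ast$ (all equalities \cref{cmp:eqn1}, the tight inequalities, and the tight bounds) and showing that the absence of Condition~\ref{cond:c1}, \ref{cond:c2}, or \ref{cond:c3} is precisely what prevents this active system from pinning $y_{ij}$, so that its kernel contains a direction nonzero in that coordinate and compatible with the tight inequalities and bounds. A secondary point to dispatch is the existence of a feasible integer $x$ realising the tested regime ($x_ix_j=0$ or $=1$); where no feasible solution exercises a given pair, consistency there is vacuous, and the equivalence is to be read over the pairs actually attainable by feasible points.
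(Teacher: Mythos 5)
Your sufficiency argument is correct and is essentially the paper's own: Conditions~\ref{cond:c1} and~\ref{cond:c2} force the zeros, and Condition~\ref{cond:c3} forces the ones by combining the induced constraint with the original constraint evaluated at $x$ and with the already-forced zeros (your two-phase ordering is exactly the paper's ``by the arguments above''). Your explicit insistence that $x$ be feasible for \cref{bqp:eqn}--\cref{bqp:ineq} is a point the paper uses only silently, in the step $\sum_{h \in A_k, h \neq i} \alpha^k_h x_h = \beta^k - \alpha^k_i$, and flagging it is an improvement in precision.

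The necessity half, however, has a genuine gap, and it is not merely the missing formalization you acknowledge: the lemma your active-set/kernel plan requires is false. You need: if Condition~\ref{cond:c1} (say) fails for $(i,j)$, then the constraints active at $y^\ast$ admit a feasible direction $d$ with $d_{ij} \neq 0$. But consistency of the \emph{other} pairs, combined with feasibility of $x$, can pin $y_{ij}$ through a chain of constraints none of which instantiates Condition~\ref{cond:c1} for $(i,j)$. Concretely, let $K = K_E$ consist of $x_1 + x_2 = 1$ (so $A_{k_1} = \{1,2\}$) and $x_2 + x_3 = 1$ (so $A_{k_2} = \{2,3\}$), and choose $B^E_{k_1} = \{3\}$, $B^E_{k_2} = \{1,2\}$. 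Then $Q = \{(1,2),(1,3),(2,2),(2,3)\}$ and the constraints \cref{cmp:eqn1} read
\begin{align*}
y_{13} + y_{23} &= x_3, & y_{12} + y_{13} &= x_1, & y_{22} + y_{23} &= x_2 .
\end{align*}
Condition~\ref{cond:c1} fails for $(1,2)$, since the only $k$ with $1 \in A_k$ is $k_1$ and $2 \notin B^E_{k_1}$, while all three conditions hold for the other pairs. The feasible integer solutions are $(1,0,1)$ and $(0,1,0)$, and at both the system pins $y$ uniquely to the consistent values: at $(1,0,1)$ the third equation forces $y_{22} = y_{23} = 0$, then the first forces $y_{13} = 1$, then the second forces $y_{12} = 0$. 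So the linearization is consistent at every feasible integer solution although Condition~\ref{cond:c1} is violated, and the regime $x_2 = 0$, $x_1 = 1$ that tests this pair is attained, so the failure is not of the vacuous kind you set aside. This is exactly the cascade you worried about, working against you: raising $y_{12}$ forces lowering $y_{13}$, which forces raising $y_{23}$, which is blocked by $y_{22} \ge 0$; the unique kernel direction of the equality system, $d = t\,(1,-1,-1,1)$ in the coordinates $(y_{12},y_{13},y_{22},y_{23})$, is incompatible with the bounds active at $y^\ast$ unless $t = 0$. Hence no completion of your construction for failing Conditions~\ref{cond:c1}/\ref{cond:c2} can exist, because the pointwise converse you are trying to prove is false.

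This also clarifies how your route differs from the paper's. The paper's ``only if'' argument is deliberately weaker and framework-level: it observes that, among constraints of type \cref{cmp:eqn1}--\cref{cmp:ineq-1} \emph{containing} $y_{ij}$, the only ones that directly force $y_{ij} = 0$ when $x_j = 0$ (resp.\ when $x_i = 0$, resp.\ force $y_{ij} = 1$ when $x_i = x_j = 1$) are those supplied by Condition~\ref{cond:c1} (resp.~\ref{cond:c2}, resp.~\ref{cond:c3}); it does not claim, as you do, that no combination of such constraints across several pairs can substitute for a missing condition. Your Condition-\ref{cond:c3} case (only $B^{I_+}$-type constraints survive, so lowering $y_{ij}$ slackens everything) is sound, but for Conditions~\ref{cond:c1} and~\ref{cond:c2} you must either retreat to the paper's per-constraint reading of necessity or accept that strict pointwise necessity over feasible integer solutions does not hold.
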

\begin{proof}
Let $(i,j) \in Q$.
By Condition~\ref{cond:c1}, there is a $k \in K$ such that
$i \in A_k$, $j \in B^E_k \cup B^{I_+}_k$ and hence either the equation
\begin{align}
    \sum_{h \in A_k, (h,j) \in Q} \alpha_h^k y_{hj} + \sum_{h \in A_k, (j,h) \in Q} \alpha_h^k y_{jh}   &=\; \beta^k x_{j} && \tag{$*_{Ej}$} \label{eqn:rhsj_proof_Ej}
\end{align}
or the inequality
\begin{align}
    \sum_{h \in A_k, (h,j) \in Q} \alpha_h^k y_{hj} + \sum_{h \in A_k, (j,h) \in Q} \alpha_h^k y_{jh}   &\le\; \beta^k x_{j} && \tag{$*_{Ij+}$} \label{eqn:rhsj_proof_Ij+}
\end{align}
exists and has $y_{ij}$ on its left hand side. Since
$\alpha_h^k > 0$ for all $h \in A_k$ and $0 \le y_{ij} \le 1$,
each of them establishes that $y_{ij} = 0$ whenever $x_j = 0$.
Similarly, by Condition~\ref{cond:c2}, there is an $\ell \in K$ such that
$j \in A_\ell$, $i \in B^E_\ell \cup B^{I_+}_\ell$ and hence the equation
\begin{align*}
    \sum_{h \in A_\ell, (h,i) \in Q} \alpha_h^\ell y_{hi} + \sum_{h \in A_\ell, (i,h) \in Q} \alpha_h^\ell y_{ih}   &=\; \beta^\ell x_{i} &&  \tag{$*_{Ei}$} \label{eqn:rhsi_proof_Ei}
\end{align*}
or the inequality
\begin{align*}
    \sum_{h \in A_\ell, (h,i) \in Q} \alpha_h^\ell y_{hi} + \sum_{h \in A_\ell, (i,h) \in Q} \alpha_h^\ell y_{ih}   &\le\; \beta^\ell x_{i} &&  \tag{$*_{Ii+}$} \label{eqn:rhsi_proof_Ii+}
\end{align*}
exists and has $y_{ij}$ on its left hand side. Since
$\alpha_h^\ell > 0$ for all $h \in A_\ell$ and $0 \le y_{ij} \le 1$,
each of them establishes that $y_{ij} = 0$ whenever $x_i = 0$.
Let now $x_i = x_j = 1$. By Condition~\ref{cond:c3}, we either have
at least one equation or at least one inequality relating $y_{ij}$ to
either $x_i$ or $x_j$. Consider first the equation case, and suppose
w.l.o.g.~that equation (\ref{eqn:rhsj_proof_Ej}) exists (the
opposite case with (\ref{eqn:rhsi_proof_Ei}) can be exploited analogously).
If $y_{ij} = 1$, there is nothing to show, so suppose that $y_{ij} < 1$ which
means that we are in the following situation:
\begin{align}
    \sum_{h \in A_k, (h,j) \in Q, h \neq i} \!\!\!\! \alpha_h^k y_{hj} + \sum_{h \in A_k, (j,h) \in Q, h \neq i} \!\!\!\! \alpha_h^k y_{jh}  &=\; \beta^k \underbrace{x_j}_{=1} -\; \alpha_i^k \underbrace{y_{ij}}_{< 1} > \beta^k - \alpha_i^k && \tag{$*'_{Ej}$} \label{eqn:rhsj_proof2_Ej}
\end{align}
At the same time, we also have $\sum_{h \in A_k, h \neq i} \alpha_h^k x_h = \beta^k - \alpha_i^k$ with $x_h \in \{0,1\}$.
In order for the equation~(\ref{eqn:rhsj_proof_Ej}) to be satisfied, an additional amount
of $(1 -y_{ij})\alpha_i^k > 0$ thus needs to be contributed by the other summands on
the left hand side of~(\ref{eqn:rhsj_proof2_Ej}).
This implies, however, that there must be some $h \in A_k$, $h \neq i$, such that $y_{hj} > 0$
(or $y_{jh} > 0$) while $x_h = 0$ -- which is impossible since
Conditions~\ref{cond:c1} and~\ref{cond:c2} are established for these variables as well.
Finally, we consider the inequality case and assume again w.l.o.g.~that
Condition~\ref{cond:c3} is satisfied by some $k \in K_I$ with
$i \in A_k$ and $j \in B^{I_-}_k$, i.e., such that the the inequality
\begin{align}
    \sum_{h \in A_k, (h,j) \in Q} \alpha_h^k (x_h - y_{hj}) + \sum_{h \in A_k, (j,h) \in Q} \alpha_h^k (x_h - y_{jh})   &\le\; \beta^k (1 - x_{j}) && \tag{$*_{Ij-}$} \label{eqn:rhsj_proof_I-}
\end{align}
exists. Its right hand side now evaluates to zero since $x_j = 1$.
Looking at the left hand side, for any $h \in A_k$
(including $i$) the terms $(x_h - y_{hj})$ respectively $(x_h - y_{jh})$
cannot be negative since $y_{hj}$ ($y_{jh}$) must be zero if $x_h$ is
(by the arguments above) and cannot be larger than one if $x_h$ is
(by its upper bound). Moreover, since the right hand side is zero
and $\alpha_h^k > 0$ for all $h \in A_k$, the terms
cannot be positive as well. It follows that $x_h = y_{hj}$ for
all $h \in A_k$ (including i) and thus $y_{ij} = 1$ as desired.
We have just shown the \emph{sufficiency} of the equations
induced by satisfying Conditions~\ref{cond:c1}--\ref{cond:c3}.
Moreover, within a framework that constructs a linearization only
by means of constraints of type (\ref{cmp:eqn1})--(\ref{cmp:ineq-1}),
it is impossible to enforce $y_{ij} = 0$ if $x_i = 0$
other than by satisfying Condition~\ref{cond:c1}, impossible to
enforce $y_{ij} = 0$ if $x_j = 0$ other than by satisfying
Condition~\ref{cond:c2}, and no other way to ensure $y_{ij} = 1$
if both $x_i$ and $x_j$ are equal to one as well than by satisfying
Condition~\ref{cond:c3}, which implies their \emph{necessity}.
\end{proof}
Theorem~\ref{thm:main} establishes that
Conditions~\ref{cond:c1}--\ref{cond:c3} are the
only relevant criteria for that
inequalities~(\ref{pck:stdlin1})--(\ref{pck:stdlin3})
be implied for integer solutions $x \in \{0,1\}^n$
for a particular $(i,j) \in Q$ -- allowing for the
construction of \lq compact\rq\ linearizations
of a given demanded \lq sparse\rq\ set of products
$P \subseteq Q$ based on an arbitrary given
linear constraint set. Known before from the
RLT (cf.~\citet{SheraliAdamsRLTZeroOne}) has
been the fact that inequalities~(\ref{pck:stdlin1})--(\ref{pck:stdlin3})
are implied for a \emph{complete} $P$, i.e.,
$P = \{(i,j) \mid i,j \in N, i < j\}$ if
a set of constraints comprising in total \emph{all}
$x_i$, $i \in N$, is multiplied by \emph{all}
these variables and, in case of inequalities, by their
complements ($1 - x_i$), which \emph{obviously} satisfies
Conditions~\ref{cond:c1}--\ref{cond:c3}.
\vspace{-0.1cm}
\section{LP Relaxation Strength of Compact Linearizations}
While this is unfortunately not possible for the
general case, we can prove that a compact linearization
yields a linear programming relaxation that is as least
as tight as the one obtained with the Glover-Woolsey linearization
if the structure of the present linear constraints is more specific.
In particular, the next two subsections together show that this
is the case if Conditions~\ref{cond:c1}--\ref{cond:c3} are satisfied
based on a selection of \lq assignment\rq\ and \lq knapsack\rq\ constraints.
\subsection{Compact Linearizations with Provably Strong LP Relaxations}\label{ss:ProveStrong}
\subsubsection{Assignment or \lq Single Selection\rq\ Equations}
Let us first consider the case where the
equations (\ref{bqp:eqn}) are assignment
(or rather \lq single selection\rq) constraints, i.e., $K = K_E$,
$a^k_i = 1$ for all $i \in A_k$ and $\beta^k = 1$ for all $k \in K$.
This was the application the compact linearization technique
was originally proposed for by~\citet{Liberti2007}.
Later, \citet{Mallach2017} clarified that a consistent
linearization is obtained if and only if
Conditions~\ref{cond:c1} and~\ref{cond:c2} are enforced.
Accidentally, and in contrast to inequalities~(\ref{pck:stdlin1}) and (\ref{pck:stdlin2}),
the proof did not verify that inequalities (\ref{pck:stdlin3}) hold as well in case of
fractional solutions $x \in [0,1]^n$. This is caught up on now by giving
a complete proof of the following theorem.
\begin{theorem} \label{thm:ass}
Let $\beta^k = 1$ for all $k \in K$, and as well $a^k_i = 1$ for each $i \in A_k$, $k \in K$.
Then for any solution $x \in [0,1]^n$, the inequalities $y_{ij} \le x_i$, $y_{ij} \le x_j$ and
$y_{ij} \ge x_i + x_j - 1$ are implied by equations~(\ref{cmp:eqn1}) for all $(i,j) \in Q$
if and only if Conditions~\ref{cond:c1} and~\ref{cond:c2} are~satisfied.
\end{theorem}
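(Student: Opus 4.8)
The plan is to prove the two implications separately, exploiting the special structure $\beta^k = 1$, $\alpha^k_i = 1$. Under these assumptions, each equation of type~\cref{cmp:eqn1} becomes simply $\sum_{i \in A_k,\,(i,j)\in Q} y_{ij} + \sum_{i \in A_k,\,(j,i)\in Q} y_{ji} = x_j$ for every $j \in B^E_k$, and the underlying original assignment constraint reads $\sum_{i \in A_k} x_i = 1$. The necessity direction is the easy half and follows essentially as in Theorem~\ref{thm:main}: if Condition~\ref{cond:c1} fails for some $(i,j) \in Q$ there is no equation forcing $y_{ij} \le x_i$, and symmetrically for Condition~\ref{cond:c2}, so I would dispatch necessity by the same framework-limitation argument used at the end of the previous proof, noting that here only equations are available.

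For sufficiency I would fix $(i,j) \in Q$ and first derive the two upper-bound inequalities $y_{ij} \le x_i$ and $y_{ij} \le x_j$. By Condition~\ref{cond:c1} there is an equation of the above form summing to $x_j$ whose left-hand side contains $y_{ij}$; since all coefficients are $1$ and all linearization variables are nonnegative, every summand is bounded above by the right-hand side, giving $y_{ij} \le x_j$ directly. The bound $y_{ij} \le x_i$ follows symmetrically from Condition~\ref{cond:c2}. The nonnegativity $y_{ij} \ge 0$ is part of the variable definition. So the genuinely new content, and the step I expect to be the main obstacle, is establishing the third inequality $y_{ij} \ge x_i + x_j - 1$ for \emph{fractional} $x$, which is exactly the point the earlier proof in \citet{Mallach2017} omitted.

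To handle $y_{ij} \ge x_i + x_j - 1$, I would combine the two equations guaranteed by Conditions~\ref{cond:c1} and~\ref{cond:c2}. Let $k$ be the index with $i \in A_k$, $j \in B^E_k$, so that the equation associated with $(k,j)$ reads $y_{ij} + \sum_{h \in A_k,\,h \neq i} y_{hj} = x_j$, where I abbreviate the correctly-oriented linearization variable by $y_{hj}$. Rearranging gives $y_{ij} = x_j - \sum_{h \in A_k,\,h\neq i} y_{hj}$, so it suffices to show $\sum_{h \in A_k,\,h\neq i} y_{hj} \le 1 - x_i$. Here I would invoke the upper bound just proved: for each $h \in A_k$ with $h \neq i$, the term $y_{hj}$ (which lies in $Q$ because $h \in A_k$ and $j \in B^E_k$) satisfies $y_{hj} \le x_h$. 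Summing and using the original assignment equation $\sum_{h \in A_k} x_h = 1$, i.e.\ $\sum_{h \in A_k,\,h\neq i} x_h = 1 - x_i$, yields $\sum_{h \in A_k,\,h\neq i} y_{hj} \le \sum_{h\in A_k,\,h\neq i} x_h = 1 - x_i$, which is precisely what is needed. Substituting back gives $y_{ij} \ge x_j - (1 - x_i) = x_i + x_j - 1$.

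The delicate points to verify carefully are that every $y_{hj}$ appearing in the equation for $(k,j)$ indeed belongs to $Q$ so that the upper bound $y_{hj} \le x_h$ is available for it, and that the orientation bookkeeping (whether a term is $y_{hj}$ or $y_{jh}$ depending on index order) does not disturb the argument, since the bound $y_{hj} \le x_h$ must hold for the variable as actually indexed. Because $h \in A_k$ and $j \in B^E_k = B_k$ for this $k$, the defining description of $Q$ places $(h,j)$ or $(j,h)$ in $Q$, and Condition~\ref{cond:c1} applied to that pair supplies the needed bound; I would state this explicitly to close the gap. I expect no further obstruction, so the proof reduces to the three inequality verifications above plus the symmetric necessity argument.
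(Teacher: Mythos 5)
Your proof is correct and takes essentially the same route as the paper's: the two upper bounds follow from the equations supplied by Conditions~\ref{cond:c1} and~\ref{cond:c2}, and the lower bound $y_{ij} \ge x_i + x_j - 1$ is obtained exactly as in the paper, by bounding the remaining terms of the Condition-\ref{cond:c1} equation via $y_{hj} \le x_h$ (valid because each such pair lies in $Q$) and the original constraint $\sum_{h \in A_k} x_h = 1$. One tiny attribution slip: for a pair oriented as $(h,j)$ the bound $y_{hj} \le x_h$ is supplied by Condition~\ref{cond:c2} (which bounds by the first index), not Condition~\ref{cond:c1}; since both conditions hold for every pair in $Q$, this does not affect the validity of your argument.
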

\begin{proof}
Let $(i,j) \in Q$.
By Condition~\ref{cond:c1}, there is a $k \in K_E$ such that
$i \in A_k$, $j \in B^E_k$ and hence the equation
\begin{align}
    \sum_{h \in A_k, (h,j) \in Q} y_{hj} + \sum_{h \in A_k, (j,h) \in Q} y_{jh}   &=\; x_{j} &&  \label{eqn:rhsj_proof3}
\end{align}
exists, has $y_{ij}$ on its left hand side, and thus establishes $y_{ij} \le x_j$.
Similarly, by Condition~\ref{cond:c2}, there is an $\ell \in K_E$ such that
$j \in A_\ell$, $i \in B^E_\ell$ and hence the equation
\begin{align}
    \sum_{h \in A_\ell, (h,i) \in Q} y_{hi} + \sum_{h \in A_\ell, (i,h) \in Q} y_{ih}   &=\; x_{i} &&  \label{eqn:rhsi_proof3}
\end{align}
exists, has $y_{ij}$ on its left hand side, and thus establishes $y_{ij} \le x_i$.
To show that $y_{ij} \ge x_i + x_j - 1$, consider equation~(\ref{eqn:rhsj_proof3}) in
combination with its original counterpart $\sum_{h \in A_k} x_h = 1$.
For any $y_{hj}$ (or $y_{jh}$) in~(\ref{eqn:rhsj_proof3}), the Conditions~\ref{cond:c1}
and~\ref{cond:c2} assure that there is an equation establishing $y_{hj} \le x_h$ ($y_{jh} \le x_h$).
Thus we have
\begin{align}
    \sum_{h \in A_k, (h,j) \in Q, h \neq i} y_{hj} + \sum_{h \in A_k, (j,h) \in Q, h \neq i} y_{jh}   &\le\; \sum_{h \in A_k, h \neq i} x_{h} = 1 - x_i && \nonumber 
\end{align}
Applying this upper bound within equation~(\ref{eqn:rhsj_proof3}), we obtain:
$$y_{ij} + \underbrace{\sum_{h \in A_k, (h,j) \in Q, h \neq i} y_{hj} + \sum_{h \in A_k, (j,h) \in Q, h \neq i} y_{jh}}_{\le 1 - x_i}  = x_j\; \Leftrightarrow y_{ij} \ge x_i + x_j - 1$$
Finally, the necessity to satisfy Conditions~\ref{cond:c1} and~\ref{cond:c2}
is given for the same reasons as mentioned after the proof of Theorem~\ref{thm:main}.
\end{proof}
Again, special cases of Theorem~\ref{thm:ass} where \emph{all}
present assignment constraints are multiplied by \emph{all}
variables $x_i$, $i \in N$ and where $P$ contains
\emph{all} possible products of these, were shown before for
the quadratic assignment (\citet{AdamsJohnsonQAP}) and
quadratic semi-assignment (\citet{Elloumi2001}) problems.
\subsubsection{\lq Knapsack\rq\ Inequalities}
We now focus on the case where the only
constraints taken into account are \lq knapsack\rq\
constraints, i.e., $K = K_I$,
$a^k_i = 1$ for all $i \in A_k$ and $\beta^k = 1$ for all $k \in K$.
\begin{theorem} \label{thm:knapsack}
Let $\beta^k = 1$ for all $k \in K$, and as well $a^k_i = 1$ for each $i \in A_k$, $k \in K$.
Then for any solution $x \in [0,1]^n$, the inequalities $y_{ij} \le x_i$, $y_{ij} \le x_j$ and
$y_{ij} \ge x_i + x_j - 1$ are implied by inequalities~(\ref{cmp:ineq+1}) and (\ref{cmp:ineq-1})
for all $(i,j) \in Q$ if and only if the Conditions~\ref{cond:c1}--\ref{cond:c3} are satisfied.
\end{theorem}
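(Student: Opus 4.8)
The plan is to mirror the structure of Theorem~\ref{thm:ass}, but to keep careful track of the fact that the knapsack inequalities~(\ref{cmp:ineq+1}) and~(\ref{cmp:ineq-1}) only provide \emph{one-sided} information, which is precisely why Condition~\ref{cond:c3} now becomes indispensable. Fix $(i,j) \in Q$. First I would establish the two ``easy'' upper bounds. By Condition~\ref{cond:c1} there is a $k \in K_I$ with $i \in A_k$ and $j \in B^{I_+}_k$, so an inequality of type~(\ref{cmp:ineq+1}) with unit coefficients exists, has $y_{ij}$ on its left hand side, and reads $\sum_h y_{hj} \le x_j$; since all the other $y_{hj}$ terms are nonnegative, this immediately yields $y_{ij} \le x_j$. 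Symmetrically, Condition~\ref{cond:c2} gives an inequality in the $i$-direction establishing $y_{ij} \le x_i$. These two steps are formally identical to the equation case of Theorem~\ref{thm:ass}, the only difference being that the relation is now $\le$ rather than $=$, which is exactly what we want for these two bounds.

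The substantive step is the lower bound $y_{ij} \ge x_i + x_j - 1$, and this is where I expect the main obstacle to lie. In the assignment case of Theorem~\ref{thm:ass} the equation~(\ref{eqn:rhsj_proof3}) could be combined with its exact original counterpart $\sum_{h \in A_k} x_h = 1$ to pin $y_{ij}$ from below. Here the corresponding inequalities give only $\sum_h y_{hj} \le x_j$ and $\sum_{h \in A_k} x_h \le 1$, and neither direction of $\le$ can force $y_{ij}$ upward. The resolution must come from Condition~\ref{cond:c3}, which guarantees either an inequality of type~(\ref{cmp:ineq-1}) in one of the two directions, or an equation. The plan is to use the complemented inequality~(\ref{cmp:ineq-1}): assuming w.l.o.g.\ that Condition~\ref{cond:c3} is met by some $k \in K_I$ with $i \in A_k$ and $j \in B^{I_-}_k$, the relevant constraint has the form
\begin{align}
    \sum_{h \in A_k, (h,j) \in Q} (x_h - y_{hj}) + \sum_{h \in A_k, (j,h) \in Q} (x_h - y_{jh}) \;\le\; 1 - x_j. \nonumber
\end{align}
The key observation is that every bracketed term other than the one for $h = i$ is nonnegative, since Conditions~\ref{cond:c1} and~\ref{cond:c2} already guarantee $y_{hj} \le x_h$ (respectively $y_{jh} \le x_h$) for those indices by the upper-bound argument above. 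Dropping all those nonnegative summands can only weaken the left hand side, so the inequality yields $x_i - y_{ij} \le 1 - x_j$, which rearranges to exactly $y_{ij} \ge x_i + x_j - 1$.

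Two loose ends then need tidying. I would note that if Condition~\ref{cond:c3} is instead satisfied by an \emph{equation}, the situation reduces to the argument already given in Theorem~\ref{thm:ass} (indeed the remark after the three conditions points out that an equation makes Condition~\ref{cond:c3} automatic), so this case carries over without extra work; and the symmetric subcase with $i \in B^{I_-}_\ell$ rather than $j \in B^{I_-}_k$ is handled by interchanging the roles of $i$ and $j$. Finally, for \emph{necessity}, I would appeal to the same structural reasoning invoked at the end of Theorem~\ref{thm:main}: within a framework using only constraints of type~(\ref{cmp:ineq+1})--(\ref{cmp:ineq-1}), there is no way to enforce $y_{ij} \le x_j$ other than via Condition~\ref{cond:c1}, no way to enforce $y_{ij} \le x_i$ other than via Condition~\ref{cond:c2}, and no way to enforce the lower bound $y_{ij} \ge x_i + x_j - 1$ other than via Condition~\ref{cond:c3}, which together give the ``only if'' direction.
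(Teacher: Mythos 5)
Your proposal is correct and follows essentially the same route as the paper's proof: upper bounds $y_{ij} \le x_j$ and $y_{ij} \le x_i$ from the inequalities~(\ref{cmp:ineq+1}) guaranteed by Conditions~\ref{cond:c1} and~\ref{cond:c2}, the lower bound by dropping the nonnegative terms $(x_h - y_{hj})$, $h \neq i$, in the complemented inequality~(\ref{cmp:ineq-1}) provided by Condition~\ref{cond:c3} (their nonnegativity justified exactly as you do, via Conditions~\ref{cond:c1} and~\ref{cond:c2}), and necessity by the same structural argument as in Theorem~\ref{thm:main}. Your brief digression on an ``equation case'' for Condition~\ref{cond:c3} is superfluous here since $K = K_I$ in this setting, but it does no harm.
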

\begin{proof}
Let $(i,j) \in Q$.
By Condition~\ref{cond:c1}, there is a $k \in K$ such that
$i \in A_k$, $j \in B^{I_+}_k$ and hence the inequality
\begin{align}
    \sum_{h \in A_k, (h,j) \in Q} y_{hj} + \sum_{h \in A_k, (j,h) \in Q} y_{jh}   &\le\; x_{j} &&  \label{ineq:rhsj_proof3}
\end{align}
exists, has $y_{ij}$ on its left hand side, and thus establishes $y_{ij} \le x_j$.
Similarly, by Condition~\ref{cond:c2}, there is an $\ell \in K$ such that
    $j \in A_\ell$, $i \in B^{I_+}_\ell$ and hence the equation
\begin{align}
    \sum_{h \in A_\ell, (h,i) \in Q} y_{hi} + \sum_{h \in A_\ell, (i,h) \in Q} y_{ih}   &\le\; x_{i} &&  \label{ineq:rhsi_proof3}
\end{align}
exists, has $y_{ij}$ on its left hand side, and thus establishes $y_{ij} \le x_i$.
Moreover, by Condition~\ref{cond:c3}, there is, w.l.o.g., some $k \in K_I$ with
$i \in A_k$ and $j \in B^{I_-}_k$, i.e., such that the inequality
\begin{align}
    \sum_{h \in A_k, (h,j) \in Q} (x_h - y_{hj}) + \sum_{h \in A_k, (j,h) \in Q} (x_h - y_{jh})   &\le\; 1 - x_{j} && \label{ineq:rhsj_proof_I-}
\end{align}
exists. Due to Conditions~\ref{cond:c1} and~\ref{cond:c2}, we have that $x_h \ge y_{jh}$ for
each $h \in A_k, (j,h) \in Q$ and $x_h \ge y_{hj}$ for each
$h \in A_k, (h,j) \in Q$ in (\ref{ineq:rhsj_proof_I-}). By reordering the
latter to
\begin{align*}
    x_j + x_i - y_{ij} + \underbrace{\sum_{h \in A_k, (h,j) \in Q, h \neq i} (x_h - y_{hj}) + \sum_{h \in A_k, (j,h) \in Q, h \neq i} (x_h - y_{jh})}_{\ge 0} &\le\; 1, &&
\end{align*}
we obtain the desired result.
The necessity of Conditions~\ref{cond:c1}--\ref{cond:c3} stems once more from
the same reasons as mentioned in the proof of Theorem~\ref{thm:main}.
\end{proof}
\subsubsection{\lq Double Selection\rq\ Equations with Induced Squares}
Another important special case, where the equations
induced by Conditions~\ref{cond:c1} and~\ref{cond:c2}
imply the inequalities~(\ref{pck:stdlin1})--(\ref{pck:stdlin3})
also for fractional solutions, is obtained if the right
hand sides of all the considered original equations $K_E$ are
equal to two and all (or a subset of) the products to be induced are
exactly those given by $A_k \times A_k$ for all $k \in K_E$. In this
case, Conditions~\ref{cond:c1} and~\ref{cond:c2} are implicitly
satisfied for all these products by choosing $B_k = A_k$ for all
$k \in K_E$. We will see in Sect.~\ref{ss:QTSP} an application where
this case occurs in practice and that also gives an example where it
is attractive to apply the compact linearization only to a subset of
the present linear constraints.
\begin{theorem} \label{thm:tsp}
If, for all $k \in K_E$, (i) $a^k_i = 1$ for all $i \in A_k$, (ii) $\beta^k = 2$, and (iii) $B^E_k = A_k$, then there
is a compact linearization such that, for any solution $x \in [0,1]^n$, the inequalities
$y_{ij} \le x_i$, $y_{ij} \le x_j$ and
$y_{ij} \ge x_i + x_j - 1$ are implied by the equations~(\ref{cmp:eqn1}) for all $(i,j) \in Q$, $i \neq j$.
\end{theorem}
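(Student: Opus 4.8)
The plan is to fix an off-diagonal pair $(i,j) \in Q$ with $i \neq j$ and to carry out the whole argument inside the single equation family~(\ref{cmp:eqn1}) induced by one constraint. By the construction of $Q$ together with hypothesis~(iii), there is a $k \in K_E$ with $\{i,j\} \subseteq A_k = B^E_k$, so every product among the elements of $A_k$ is induced. Writing $y_{\{a,b\}}$ for $y_{ab}$ if $a \le b$ and for $y_{ba}$ otherwise, the first step is to record, for each $j' \in A_k$ and using $\alpha^k \equiv 1$, $\beta^k = 2$, the induced equation
\[
\sum_{h \in A_k,\,(h,j') \in Q} y_{hj'} + \sum_{h \in A_k,\,(j',h) \in Q} y_{j'h} = 2 x_{j'},
\]
and to dispose of its diagonal term. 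Since $B^E_k = A_k$, the square $x_{j'}x_{j'}$ is among the induced products, and because $x_{j'}^2 = x_{j'}$ holds for every binary solution, the \emph{compact linearization I would choose} treats this induced square by $y_{j'j'} = x_{j'}$. Moving it to the right hand side turns each induced equation into the reduced, assignment-like form $\sum_{h \in A_k,\,h \neq j'} y_{\{h,j'\}} = x_{j'}$; this is precisely where the hypothesis $\beta^k = 2$ is used.

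The two upper bounds are then immediate and mirror the proof of Theorem~\ref{thm:ass}: in the reduced equation for $j$ all summands are nonnegative and add up to $x_j$, whence $y_{ij} \le x_j$, and the reduced equation for $i$ yields $y_{ij} \le x_i$ in the same manner.

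The lower bound is the crux and I would obtain it by aggregating the whole family rather than a single equation. Summing the reduced equations over all $j' \in A_k$, every unordered pair contributes to exactly two of them, so the total mass $T := \sum_{\{a,b\} \subseteq A_k,\, a \neq b} y_{\{a,b\}}$ satisfies $2T = \sum_{j' \in A_k} x_{j'} = 2$, i.e. $T = 1$. Adding only the reduced equations for $i$ and for $j$ gives
\[
2 y_{ij} + U = x_i + x_j, \qquad U := \sum_{h \in A_k \setminus \{i,j\}} \bigl( y_{\{i,h\}} + y_{\{j,h\}} \bigr).
\]
Splitting $T = y_{ij} + U + V$ with $V := \sum_{\{a,b\} \subseteq A_k \setminus \{i,j\}} y_{\{a,b\}}$ and substituting $U = 1 - y_{ij} - V$ produces the identity $y_{ij} = x_i + x_j - 1 + V$. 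As $V \ge 0$ by nonnegativity of the linearization variables, this yields $y_{ij} \ge x_i + x_j - 1$, with equality whenever $|A_k| \le 3$ (where $V$ is empty).

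I expect this lower bound to be the main obstacle. In the assignment case $\beta^k = 1$ it followed from one equation together with the already-derived upper bounds, but here that route only delivers the weaker $y_{ij} \ge x_i + 2x_j - 2$; one genuinely has to bring in all $|A_k|$ induced equations to pin down the aggregate $T = 1$ and then isolate the complementary pairs collected in $V$. The second point requiring care is the treatment of the induced squares: the hypothesis $B^E_k = A_k$ forces the diagonal products $x_{j'}^2$ into $Q$, and the reduction to assignment-like equations---and hence the entire argument---rests on the substitution $y_{j'j'} = x_{j'}$, which is exactly the specific compact linearization asserted to exist in the statement.
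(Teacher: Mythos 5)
Your proposal is correct and follows essentially the same route as the paper's proof: eliminate the induced squares via $y_{j'j'} = x_{j'}$ to get assignment-like equations, read off the upper bounds from nonnegativity, aggregate all reduced equations to obtain the identity $\sum_{\{a,b\} \subseteq A_k, a \neq b} y_{\{a,b\}} = 1$ (the paper's equation~(\ref{cmp:sp2_3})), and combine it with the equations for $i$ and $j$ to get the lower bound. The only difference is cosmetic: where the paper bounds the non-$y_{ij}$ terms by this aggregate in an inequality chain, you carry out the bookkeeping exactly and obtain the sharper identity $y_{ij} = x_i + x_j - 1 + V$ with $V \ge 0$.
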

\begin{proof}
 Due to (i)--(iii), the induced equations~(\ref{cmp:eqn1}) look like:
\begin{align}
    y_{jj} + \sum_{h \in A_k, h < j} y_{hj}\ + \sum_{h \in A_k, j < h} y_{jh}    &=\; 2 x_{j} && \mbox{for all } j \in A_k,\; \mbox{for all } k \in K_E \nonumber 
\end{align}
Since $y_{jj}$ shall take on the same value as $x_j$, we may eliminate $y_{jj}$ on
the left and once subtract $x_j$ on the right. We obtain:
\begin{align}
   \sum_{h \in A_k, h < j} y_{hj}\ + \sum_{h \in A_k, j < h} y_{jh}    &=\; x_{j} && \mbox{for all } j \in A_k,\; \mbox{for all } k \in K_E \label{cmp:sp2_2}
\end{align}
These equations establish inequalities~(\ref{pck:stdlin1}) and (\ref{pck:stdlin2}) for all
$y_{ij}$, $(i,j) \in Q$, $i \neq j$.
Combining them with the original equations $\sum_{a \in A_k} x_a = 2$ yields the following identities:
\begin{align}
    2 = \sum_{a \in A_k} x_a = \sum_{a \in A_k} \big( \sum_{h \in A_k, h < a} y_{ha}\ + \sum_{h \in A_k, a < h} y_{ah} \big) = 2 * \sum_{a \in A_k} \sum_{h \in A_k, a < h} y_{ah} \nonumber 
\end{align}
As an immediate consequence, it follows (even for fractional $x$) that:
\begin{align}
    \sum_{a \in A_k} \sum_{h \in A_k, a < h} y_{ah} = 1 \label{cmp:sp2_3}
\end{align}
Since $\{i,j\} \subseteq A_k$, we obtain a subtotal of (\ref{cmp:sp2_3})
if we sum the equations~(\ref{cmp:sp2_2}) expressed for $i$ and for $j$
(which both contain $y_{ij}$ on their left hand sides). We can exploit
    this as follows (cf.~\cite{FischerDiss}) in order to show that $y_{ij} \ge x_i + x_j - 1$:
\begin{align*}
    x_i + x_j &=  \sum_{h \in A_k, i < h} y_{ih}\ + \sum_{h \in A_k, h < i} y_{hi} + \sum_{h \in A_k, j < h} y_{jh}\ + \sum_{h \in A_k, h < j} y_{hj}\\
              &=  y_{ij} + \sum_{h \in A_k, i < h \neq j} y_{ih}\ + \sum_{h \in A_k, j \neq h < i} y_{hi} + \sum_{h \in A_k, j < h} y_{jh}\ + \sum_{h \in A_k, h < j} y_{hj}\\
	      &\overset{(\ref{cmp:sp2_3})}{\le}\; y_{ij} + \sum_{a \in A_k} \sum_{h \in A_k, a < h} y_{ah} \\
              &= y_{ij} + 1
\end{align*}
\end{proof}
\begin{remark}
If $\beta^k > 2$ in Theorem~\ref{thm:tsp} or $\beta^k \ge 2$ in the general setting, then
it is impossible to conclude $y_{ij} \le x_i$ and $y_{ij} \le x_j$ from the
linearization equations for fractional $x$.
Moreover, if $B_k \neq A_k$, then it
is impossible to conclude $y_{ij} \ge x_i + x_j - 1$ from~(\ref{cmp:sp2_3}).
\end{remark}
\subsection{A Scenario with a Strictly Stronger LP Relaxation}\label{ss:Stronger}
Sect.~\ref{ss:ProveStrong} displayed scenarios where the proposed technique
provides a linear programming relaxation that is at least as strong as the one
obtained using the Glover-Woolsey linearization.
If the \emph{equation} sets $A_k$, $k \in K_E$, and $P$ allow to construct
a compact linearization with $Q = P$ (after possible squares are eliminated),
then it can be shown that the corresponding relaxation is even \emph{strictly}
stronger. For example, this is true for the applications presented in Sect.~\ref{s:Apps}.
Moreover, a set $Q$ generated can sometimes also be \emph{made}
compliant to this case or at least strengthened and at the same time reduced in size
by a postprocessing, if the particular problem at hand allows to fix the values of
(some of) the variables in $Q \setminus P$ prior to solving~it.
\begin{observ}\label{ob:MaxHalf}
Let $(i,j) \in P$ and suppose that $x_i > 0$, $x_j > 0$, and $x_i + x_j \le 1$
hold in a given optimum solution to the linear programming relaxation obtained
with~the Glover-Woolsey linearization. Then inequalities~(\ref{pck:stdlin3})
are dominated by the trivial ones, i.e., $y_{ij} \ge 0$.
Thus, if $d_{ij} > 0$ and
no other constraint enforces $y_{ij} > 0$, then~$y_{ij} = 0$.
\end{observ}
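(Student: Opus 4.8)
The plan is to analyze the Glover--Woolsey inequalities~(\ref{pck:stdlin1})--(\ref{pck:stdlin3}) for the single product $(i,j)$ under the stated hypotheses $x_i > 0$, $x_j > 0$, and $x_i + x_j \le 1$. The key observation is that the constraint~(\ref{pck:stdlin3}), namely $y_{ij} \ge x_i + x_j - 1$, has a right hand side $x_i + x_j - 1 \le 0$ precisely because of the assumption $x_i + x_j \le 1$. Therefore~(\ref{pck:stdlin3}) is implied by the trivial nonnegativity bound $y_{ij} \ge 0$, which we may always assume is present since $y_{ij} \in [0,1]$ in the Glover--Woolsey linearization. This establishes the first sentence of the claim without any real calculation: a lower bound of a nonpositive quantity is weaker than (dominated by) the lower bound of zero.

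For the second sentence, I would argue that at optimality $y_{ij}$ will be driven to its smallest feasible value whenever it carries a strictly positive objective coefficient. First I would collect all the constraints in the relaxation that bound $y_{ij}$ from below. These are~(\ref{pck:stdlin3}), which we have just shown to be dominated by $y_{ij} \ge 0$, and, by hypothesis, no other constraint enforcing $y_{ij} > 0$. Hence the only active lower bound on $y_{ij}$ is the trivial one, $y_{ij} \ge 0$. Since $d_{ij} > 0$, decreasing $y_{ij}$ strictly decreases the objective (we are minimizing), so any optimal solution must set $y_{ij}$ to its lower bound; the upper bounds~(\ref{pck:stdlin1}) and~(\ref{pck:stdlin2}), being $y_{ij} \le x_i$ and $y_{ij} \le x_j$ with $x_i, x_j > 0$, do not prevent this. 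I would conclude $y_{ij} = 0$ in the given optimum.

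The only subtlety worth stating carefully is the meaning of ``no other constraint enforces $y_{ij} > 0$'': this is a hypothesis that rules out coupling through the remaining constraints~(\ref{bqp:linearized}), so that the argument reduces to a purely local reasoning about the three Glover--Woolsey inequalities plus the variable bounds. I expect the main (and only mild) obstacle to be phrasing this dominance and optimality argument precisely enough that it is clearly a statement about a fixed optimum solution rather than about the polytope globally, since the hypotheses $x_i > 0$, $x_j > 0$, $x_i + x_j \le 1$ are conditions on a particular point. Given that, the proof is essentially a one-line sign argument for each sentence, and no genuine case analysis or induction is required.
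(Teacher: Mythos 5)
Your proposal is correct and matches the reasoning the paper leaves implicit in stating this as an Observation: since $x_i + x_j \le 1$ makes the right hand side of~(\ref{pck:stdlin3}) nonpositive, that inequality is dominated by $y_{ij} \ge 0$, and with $d_{ij} > 0$ and no other constraint bounding $y_{ij}$ from below, minimization drives $y_{ij}$ to zero. Your explicit handling of the coupling constraints~(\ref{bqp:linearized}) via the ``no other constraint'' hypothesis is exactly the intended reading.
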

Now, for a fixed $i \in N$, let $k \in K_E$ be an index such that $i \in B^E_k$.
Under the assumptions made before, it is then readily seen that the corresponding
induced equation
$$\sum_{h \in A_k, h < i} \alpha_h^k y_{hi} + \sum_{h \in A_k, i < h} \alpha_h^k y_{ih}   =\; \beta^k x_{i}$$
of the associated compactly linearized program formulation cuts off any point
where the situation described in Observation~\ref{ob:MaxHalf} applies to \emph{all}
products $(i,h)$ or $(h,i) \in P$ with $h \in A_k$. Consequently, this is true
for \emph{all the points} whose components have non-zero entries for \emph{at least one}
$x_i$, $i\in N$, and all $x_h$, $h \in A_k$, associated to some $k \in K_E$ such that
$i \in B^E_k$, and zero-entries for all corresponding products $y_{ih}$ or $y_{hi}$.
It also becomes visible, why a set $Q=P$ needs to be assumed for this construction, as
otherwise the value $\beta^k x_{i}$ could be entirely assigned to some $y_{hi}$ or
$y_{ih}$ for $h \in A_k$ where $(h,i)$ respectively $(i,h)$ is in $Q \setminus P$.
Such a point however cannot occur using a Glover-Woolsey linearization as it only
linearizes the products in $P$. For a similar reason, it is also necessary to
establish consistency, i.e.~to satisfy Conditions~\ref{cond:c1}--\ref{cond:c3}, for
all products in $Q$ rather than just those in $P$.
\section{Obtaining a Compact Linearization (Automatically)}\label{s:Compute}
We shall now elaborate on how to obtain
a consistent linearization while inducing
a minimum number of additional constraints
and as well a set $Q \supseteq P$
as small as possible.
Such a \lq most compact\rq\ linearization
can be computed by solving the
following mixed-integer program:
\begin{align}
\SwapAboveDisplaySkip
    \IPmin  & \sum_{1 \le i \le n} \bigg( \sum_{k \in K_E} w_E z^E_{ik} + \sum_{k \in K_I} \Big( w_{I_+} z^{I_+}_{ik} + w_{I_-} z^{I_-}_{ik} \Big) \bigg) + w_{Q} \bigg( \sum_{1 \le i \le n}\sum_{i \le j \le n} f_{ij} \bigg)    \span \span  \span\span \nonumber \\
\IPst   &  f_{ij}                                         &=\;   & 1          && \mbox{for all } (i, j) \in P  \label{minB:EinF0}  \\
        &  f_{ij}                                         &\ge\; & z^E_{jk}     && \mbox{for all } k \in K_E, i \in A_k, j \in N, i \le j \label{minB:EinF1a} \\
	&  f_{ji}                                         &\ge\; & z^E_{jk}     && \mbox{for all } k \in K_E, i \in A_k, j \in N, j < i \label{minB:EinF1b} \\
        &  f_{ij}                                         &\ge\; & z^{I_+}_{jk}     && \mbox{for all } k \in K_I, i \in A_k, j \in N, i \le j \label{minB:EinF2a} \\
	&  f_{ji}                                         &\ge\; & z^{I_+}_{jk}     && \mbox{for all } k \in K_I, i \in A_k, j \in N, j < i \label{minB:EinF2b} \\
        &  f_{ij}                                         &\ge\; & z^{I_-}_{jk}     && \mbox{for all } k \in K_I, i \in A_k, j \in N, i \le j \label{minB:EinF3a} \\
	&  f_{ji}                                         &\ge\; & z^{I_-}_{jk}     && \mbox{for all } k \in K_I, i \in A_k, j \in N, j < i \label{minB:EinF3b} \\
	&  \sum_{\mathclap{k \in K_E: i \in A_k}} z^E_{jk}\;\; + \;\; \sum_{\mathclap{k \in K_I: i \in A_k}} z^{I_+}_{jk}   &\ge\; & f_{ij}       && \mbox{for all } 1 \le i \le j \le n \label{minB:EinF1} \\
	&  \sum_{\mathclap{k \in K_E: j \in A_k}} z^E_{ik}\;\; + \;\; \sum_{\mathclap{k \in K_I: j \in A_k}} z^{I_+}_{ik}   &\ge\; & f_{ij}       && \mbox{for all } 1 \le i \le j \le n \label{minB:EinF2} \\
	&  \sum_{\mathclap{k \in K_E: j \in A_k}} z^E_{ik}\;\; + \;\; \sum_{\mathclap{k \in K_I: j \in A_k} }z^{I_-}_{ik}\;\; + \;\; \nonumber \\
	&  \sum_{\mathclap{k \in K_E: i \in A_k}} z^E_{jk}\; + \;\; \sum_{\mathclap{k \in K_I: i \in A_k} }z^{I_-}_{jk}
	&\ge\; & f_{ij}       && \mbox{for all } 1 \le i \le j \le n \label{minB:EinF3} \\
	&  f_{ij}                                         &\in\; & [0,1]    && \mbox{for all } 1 \le i \le j \le n \nonumber \\
        &  z^E_{ik}                                       &\in\; & \{0,1\}    && \mbox{for all } k \in K_E, 1 \le i \le n  \nonumber \\
        &  z^{I_+}_{ik}                                       &\in\; & \{0,1\}    && \mbox{for all } k \in K_I, 1 \le i \le n  \nonumber \\
        &  z^{I_-}_{ik}                                       &\in\; & \{0,1\}    && \mbox{for all } k \in K_I, 1 \le i \le n  \nonumber
\end{align}
The formulation involves binary variables $z^E_{ik}$ to
be equal to $1$ if $i \in B^E_k$ for $k \in K_E$ and equal
to zero otherwise, and
binary variables $z^{I_+}_{ik}$ and $z^{I_-}_{ik}$ to express whether
$i \in B^{I_+}_k$ and $i \in B^{I_-}_k$ for $k \in K_I$. To account for whether
$(i,j) \in Q$, there is a further continuous variable $f_{ij}$ for all
$1 \le i \le j \le n$ that will be equal to $1$ in this case and equal to zero
otherwise.
Constraints \cref{minB:EinF0} fix those $f_{ij}$ to $1$ where the corresponding
pair $(i,j)$ is contained in $P$. Whenever some $j \in N$
is assigned to some set $B_k$, then we induce the corresponding products $(i,j) \in Q$
or $(j,i) \in Q$ for all $i \in A_k$ which is established by inequalities
\cref{minB:EinF1a}--\cref{minB:EinF3b}. Finally, if $(i,j) \in Q$, then we
require Conditions~\ref{cond:c1}--\ref{cond:c3} to be satisfied by inequalities
\cref{minB:EinF1}--\cref{minB:EinF3}, respectively.
The Conditions~\ref{cond:c1}--\ref{cond:c3} impose a certain minimum on the
number of constraints $|B|$
which depends on $P$, the sizes of the sets $A_k$, $k \in K$, and
the distribution of the variables $x_i$, $i \in N$, across them.
In general, different solutions achieving this minimum
may lead to different cardinalities of $|Q|$.
A rational choice for the weights introduced in the objective function
is thus $w_{Q} = 1$ and $w_E = w_{I_+} = w_{I_-} > \max_{k \in K} |A_k|$.
This results in a solution with a minimum number
of constraints that, among these, also induces a minimal number of variables.
Also one might prefer equations by choosing $w_E$ larger compared to the
other weights.
The mixed-integer program is interesting especially for an automated
linearization, e.g.~as part of a mixed-integer programming solver.
It can be significantly simplified if only equations are considered.
Moreover, if in addition the equation comprising each $x_i$, $i \in N$
that is involved in a product is unique, i.e.,
$A_k \cap A_\ell = \emptyset$, for all $k,\ell \in K$, $\ell \neq k$, it
reduces to a linear program with a totally unimodular (TU) constraint
matrix and can alternatively be solved using a simple combinatorial
algorithm as described in~\citet{Mallach2017}. This algorithm
might also be altered such that it is still applicable as a
heuristic in the case of non-disjoint sets $A_k$, $k \in K$.
When considering a particular problem formulation on a paper print, an
associated (most) compact linearization is however
typically \lq recognized\rq\ easily by hand.
\section{Applications}\label{s:Apps}
While the number of existing as well as prospective
applications of the technique proposed is large, we highlight in this
section two prominent combinatorial optimization problems
where formulations found earlier appear now as compact~linearizations.
\subsection{Quadratic Assignment Problem}
Consider a canonical integer programming
formulation for the $n$-by-$n$ quadratic
assignment problem (QAP) in the form
by~\citet{KoopmansBeckmann},
with variables $x_{ip} \in \{0,1\}$ for \lq facilities\rq\ or \lq items\rq\
$i \in \{1, \dots, n\}$ and \lq locations\rq\ or \lq positions\rq\
$p \in \{1, \dots, n\}$. 
Let $y_{ipjq}$ represent the linearization
variable of the product $x_{ip} \cdot x_{jq}$ of any two such variables.
As already mentioned by \citet{Liberti2007}, the following
formulation by~\citet{FriezeYadegar83} can be obtained
by applying the methodology of the compact linearization
technique (and ignoring commutativity in the first place).
\begin{align}
\SwapAboveDisplaySkip
\IPmin & \sum_{i=1}^n \sum_{p=1}^n \sum_{j=1}^n \sum_{q=1}^n d_{ijpq} y_{ipjq} + \sum_{i=1}^n \sum_{p=1}^n c_{ip} x_{ip} \span\span\span\span \nonumber \\
\IPst  & \sum_{i=1}^n x_{ip}      &=\;   & 1       &&\mbox{for all}\; p \in \{1, \dots, n\} \label{qap:assI}\\
       & \sum_{p=1}^n x_{ip}      &=\;   & 1       &&\mbox{for all}\; i \in \{1, \dots, n\} \label{qap:assP}\\
       & \sum_{i = 1}^n y_{ipjq}  &=\;   & x_{jq}  &&\mbox{for all}\; p,j,q \in \{1, \dots, n\} \label{qap:linI}\\
       & \sum_{p = 1}^n y_{ipjq}  &=\;   & x_{jq}  &&\mbox{for all}\; i,j,q \in \{1, \dots, n\} \label{qap:linP}\\
       & \sum_{j = 1}^n y_{ipjq}  &=\;   & x_{ip}  &&\mbox{for all}\; i,p,q \in \{1, \dots, n\} \label{qap:linJ}\\
       & \sum_{q = 1}^n y_{ipjq}  &=\;   & x_{ip}  &&\mbox{for all}\; i,p,j \in \{1, \dots, n\} \label{qap:linQ}\\
       & y_{ipip}                 &=\;   & x_{ip}  &&\mbox{for all}\; i,p \in \{1, \dots, n\}  \label{qap:lin}\\
       & y_{ipjq}                 &\in\; & [0,1]   &&\mbox{for all}\; i,p,j,q \in \{1, \dots, n\} \nonumber\\
       & x_{ip}                   &\in\; & \{0,1\} &&\mbox{for all}\; i,p \in \{1, \dots, n\} \nonumber
\end{align}
For each $y_{ipjq}$, $i,p,j,q \in \{1, \dots, n\}$, the displayed formulation however satisfies
each of the Conditions~\ref{cond:c1} and~\ref{cond:c2} \emph{twice}, i.e., it
is not a compact linearization of minimum size. There is an equivalent formulation
by~\citet{AdamsJohnsonQAP} that comprises only (\ref{qap:linI}) and
(\ref{qap:linP}), and thus satisfies Conditions~\ref{cond:c1} (twice) while
Conditions~\ref{cond:c2} are only \lq indirectly\rq\ satisfied by means of
additional identity constraints $y_{ipjq} = y_{jqip}$ for all
$i,p,j,q \in \{1, \dots, n\}$.
Hence, this formulation cannot, at least not directly, be generated from the
compact linearization approach.
To characterize a \lq most compact\rq\ QAP linearization,
let $K = K_P^E \cup K_I^E$, where $K_P^E$ corresponds to the assignment
constraints (\ref{qap:assI}) and $K_I^E$ corresponds to the assignment
constraints (\ref{qap:assP}). For each $p \in K_P^E$, we have
$A_p = \{ ip \mid i \in \{1, \dots, n\} \}$, and for each $i \in K_I^E$, we have
$A_i = \{ ip \mid p \in \{1, \dots, n\} \}$\footnote{To ease notation, we
treat $ip$ as an index that would of course truly be $i \cdot n + p$.}.
Hence, all the variables $x_{ip}$, $i,p \in \{1, \dots, n\}$,
occur exactly once in $\bigcup_{p \in K_P^E} A_p$ as well as exactly once in
$\bigcup_{i \in K_I^E} A_i$.
Thus, in order to induce all products and to satisfy Conditions~\ref{cond:c1}
and~\ref{cond:c2} for them, it suffices to set
\emph{either} $B_p = \bigcup_{q \in K_P^E} A_q$ for all $p \in K_P^E$ -- which
induces (\ref{qap:linI}) and (\ref{qap:linJ}), \emph{or}
$B_i = \bigcup_{j \in K_I^E} A_j$ for all $i \in K_I^E$ -- which induces
(\ref{qap:linP}) and (\ref{qap:linQ}).
Moreover, since the identities (\ref{qap:lin}) and all variables $y_{ipiq}$ for all pairs
$p,q \in \{1, \dots, n\}$ as well as all variables $y_{ipjp}$ for all pairs $i,j \in \{1, \dots, n\}$ can be
eliminated, it suffices to formulate (\ref{qap:linP}) and (\ref{qap:linQ}) only
for $i \neq j$, and (\ref{qap:linI}) and (\ref{qap:linJ}) only for $p \neq q$.
If one further identifies $y_{jqip}$ with $y_{ipjq}$ whenever $i < j$, it even
suffices to have only exactly \emph{one} of these four equation sets in order to
satisfy Conditions~\ref{cond:c1} and~\ref{cond:c2}. 
The total number of additional equations then reduces to $n^3 - n^2$ compared to
$3 \cdot \big( \frac{1}{2} (n^2 - n) (n^2 -n) \big) = \frac{3}{2} (n^4 - 2n^3 + n^2)$
inequalities when using the Glover-Woolsey linearization and creating
$y_{ipjq}$ only for $i < j$ and $p \neq q$ as well. However, these most
compact formulations have a considerably weaker linear programming relaxation
than the ones by~\citet{FriezeYadegar83} and~\citet{AdamsJohnsonQAP}.
\subsection{Symmetric Quadratic Traveling Salesman Problem}\label{ss:QTSP}
The symmetric quadratic traveling salesman problem asks for a
tour $T \subseteq E$ in a complete undirected graph $G=(V,E)$ such that
the objective
$\sum_{\{i,j,k\} \subseteq V, j \neq i < k \neq j } c_{ijk} x_{ij} x_{jk}$
(where $x_{ij} = 1$ if and only if $\{i,j\} \in T$) is minimized.
Consider the following mixed-integer programming formulation for this
problem as presented by~\citet{FischerH13} and oriented at the integer
programming formulation for the linear traveling
salesman problem by~\citet{Dantzig54}.
\begin{align}
\IPmin & \sum_{\{i,j,k\} \subseteq V, j \neq i < k \neq j } c_{ijk} y_{ijk} \span\span\span\span \nonumber \\
\IPst  & \sum_{\{i,j\} \in E} x_{ij}  &=\;   & 2     &&\mbox{for all}\; i \in V \label{qtsp:degree}\\
       & x(E(W))                                 &\le\; & |W|-1  &&\mbox{for all}\; W \subsetneq V,\; 2 \le |W| \le |V| - 2 \nonumber \\ 
       & y_{ijk}                                 &=\; & x_{ij} x_{jk} &&\mbox{for all}\; \{i,j,k\} \subseteq V, j \neq i < k \neq j \label{qtsp:lin}\\
       & x_{ij}                                 &\in\; & \{0,1\}     &&\mbox{for all}\; \{i,j\} \in E \nonumber
\end{align}
In the context of the approach presented, we
consider only the linear equations~(\ref{qtsp:degree}) so that we have
$K = K_E = V$, $A_k = \{ jk \mid j < k \mbox { and } \{j,k\} \in E \}$,
$\alpha_i^k = 1$ for all $i \in A_k$ and $\beta^k = 2$ for all $k \in K$.
Since we are interested in the bilinear terms of the form as in~(\ref{qtsp:lin}),
i.e.~in each pair of edges with common index $j$, we need to set $B_k = A_k$ for
all $k \in K$ in order to satisfy both Conditions~\ref{cond:c1} and~\ref{cond:c2}
for each such pair.
We thus comply to the requirements of the special case addressed
in Theorem~\ref{thm:tsp} (Sect.~\ref{ss:ProveStrong}) and obtain the equations:
\begin{align}
    \sum_{\{i,j\} \in E}  x_{ij} x_{jk}     &=\; 2 x_{jk} && \mbox{for all } \{j,k\} \in E,\; \mbox{for all } j \in V \nonumber 
\end{align}
After introducing linearization variables with indices ordered as desired, these are resolved as:
\begin{align}
    \sum_{\{i,j,k\} \subseteq V, j \neq i \le k \neq j} y_{ijk} &=\; 2 x_{jk} && \mbox{for all } \{j,k\} \in E,\; \mbox{for all } j \in V \nonumber 
\end{align}
Each of these equations induces one variable more than originally demanded,
namely $y_{kjk}$ as the linearized substitute for the square term $x_{jk} x_{jk}$.
Thus we may safely subtract $y_{kjk}$ from the left and $x_{jk}$ from the right
hand side and obtain
\begin{align}
    \sum_{\{i,j,k\} \subseteq V, j \neq i < k \neq j} y_{ijk} &=\; x_{jk} && \mbox{for all } \{j,k\} \in E,\; \mbox{for all } j \in V \nonumber
\end{align}
which are exactly the linearization constraints as presented by~\citet{FischerH13}.
\section{Conclusion}\label{s:Concl}
As has been shown in this paper, the compact linearization technique
can be applied not only to binary quadratic problems with assignment
constraints, but to those with arbitrary linear constraints with
positive coefficients and right hand sides. We discussed two particular cases where
the continuous relaxation of the obtained compactly linearized
problem formulation is provably as least as strong as the one
obtained with the well-known linearization by~\citet{GloverWoolsey74}.
Moreover, we highlighted
previously found formulations for the quadratic assignment problem
and the symmetric quadratic traveling salesman problem that appear
as special cases that result when applying the proposed method.
Last but not least, we demonstrated how a compact linearization
can be generated automatically.
\section*{Acknowledgments}
I would like to thank Anja Fischer for bringing up the question
whether the original approach for assignment constraints might
be possibly generalized to arbitrary right hand sides and
pointing me to her research about the symmetric quadratic traveling
salesman problem. Just as well I want to thank Sourour Elloumi for
encouraging me to consider the inequality cases in more detail.
\bibliographystyle{spbasic}
\bibliography{bibfile}

\begin{thebibliography}{31}
\providecommand{\natexlab}[1]{#1}
\providecommand{\url}[1]{{#1}}
\providecommand{\urlprefix}{URL }
\expandafter\ifx\csname urlstyle\endcsname\relax
  \providecommand{\doi}[1]{DOI~\discretionary{}{}{}#1}\else
  \providecommand{\doi}{DOI~\discretionary{}{}{}\begingroup
  \urlstyle{rm}\Url}\fi
\providecommand{\eprint}[2][]{\url{#2}}

\bibitem[{Adams and Johnson(1994)}]{AdamsJohnsonQAP}
Adams WP, Johnson TA (1994) Improved linear programming-based lower bounds for
  the quadratic assignment problem. In: Pardalos PM, Wolkowicz H (eds)
  Quadratic Assignment and Related Problems, DIMACS Series on Discrete
  Mathematics and Computer Science, vol~16, AMS, Providence, RI, USA, pp 43--75

\bibitem[{Adams and Sherali(1986)}]{SheraliAdamsRLTZeroOne}
Adams WP, Sherali HD (1986) A tight linearization and an algorithm for zero-one
  quadratic programming problems. Management Science 32(10):1274--1290,
  \doi{10.1287/mnsc.32.10.1274}

\bibitem[{Adams and Sherali(1999)}]{SheraliAdamsRLTBook}
Adams WP, Sherali HD (1999) A Reformulation-Linearization Technique for Solving
  Discrete and Continuous Nonconvex Problems, Nonconvex Optimization and its
  Applications, vol~31. Springer, \doi{10.1007/978-1-4757-4388-3}

\bibitem[{Balas(1964)}]{Balas64}
Balas E (1964) Extension de l'algorithme additif \`{a} la programmation en
  nombres entiers et \`{a} la programmation non lin\'{e}aire. Comptes rendus de
  l'Acad\'{e}mie des Sciences Paris 258:5136--5139

\bibitem[{Billionnet and Elloumi(2001)}]{Elloumi2001}
Billionnet A, Elloumi S (2001) Best reduction of the quadratic semi-assignment
  problem. Discrete Applied Mathematics 109(3):197 -- 213,
  \doi{10.1016/S0166-218X(00)00257-2}

\bibitem[{Chaovalitwongse et~al.(2004)Chaovalitwongse, Pardalos, and
  Prokopyev}]{CPP2004}
Chaovalitwongse W, Pardalos PM, Prokopyev OA (2004) A new linearization
  technique for multi-quadratic 0-1 programming problems. Oper Res Lett
  32(6):517--522, \doi{10.1016/j.orl.2004.03.005}

\bibitem[{Dantzig et~al.(1954)Dantzig, Fulkerson, and Johnson}]{Dantzig54}
Dantzig G, Fulkerson R, Johnson S (1954) Solution of a large-scale
  traveling-salesman problem. Journal of the Operations Research Society of
  America 2(4):393--410, \doi{10.1287/opre.2.4.393}

\bibitem[{{De Simone}(1990)}]{DeSimone90}
{De Simone} C (1990) The cut polytope and the boolean quadric polytope.
  Discrete Math 79(1):71--75, \doi{10.1016/0012-365X(90)90056-N}

\bibitem[{Fischer(2013)}]{FischerDiss}
Fischer A (2013) A polyhedral study of quadratic traveling salesman problems.
  PhD thesis, Chemnitz University of Technology

\bibitem[{Fischer and Helmberg(2013)}]{FischerH13}
Fischer A, Helmberg C (2013) The symmetric quadratic traveling salesman
  problem. Math Program 142(1-2):205--254, \doi{10.1007/s10107-012-0568-1}

\bibitem[{Fortet(1959)}]{Fortet59}
Fortet R (1959) L'alg\`{e}bre de boole et ses applications en recherche
  op\'{e}rationnelle. Cahiers du Centre d'Etudes de Recherche
  Op\'{e}rationnelle 1:5--36

\bibitem[{Fortet(1960)}]{Fortet60}
Fortet R (1960) Applications de l'alg\`{e}bre de boole en recherche
  op\'{e}rationnelle. Revue de la Soci\'{e}t\'{e} Fran\c{c}aise de Recherche
  Op\'{e}rationnelle 4:17--26

\bibitem[{Frieze and Yadegar(1983)}]{FriezeYadegar83}
Frieze AM, Yadegar J (1983) On the quadratic assignment problem. Discrete Appl
  Math 5(1):89--98, \doi{10.1016/0166-218X(83)90018-5}

\bibitem[{Furini and Traversi(2013)}]{FuriniTraversi}
Furini F, Traversi E (2013) Extended linear formulation for binary quadratic
  problems. Optimization Online

\bibitem[{Glover(1975)}]{Glover75}
Glover F (1975) Improved linear integer programming formulations of nonlinear
  integer problems. Manage Sci 22(4):455--460, \doi{10.1287/mnsc.22.4.455}

\bibitem[{Glover and Woolsey(1973)}]{GloverWoolsey73}
Glover F, Woolsey E (1973) Further reduction of zero-one polynomial programming
  problems to zero-one linear programming problems. Oper Res 21(1):156--161

\bibitem[{Glover and Woolsey(1974)}]{GloverWoolsey74}
Glover F, Woolsey E (1974) Converting the 0-1 polynomial programming problem to
  a 0-1 linear program. Oper Res 22(1):180--182, \doi{10.1287/opre.22.1.180}

\bibitem[{Gueye and Michelon(2009)}]{GueyeMichelon2009}
Gueye S, Michelon P (2009) A linearization framework for unconstrained
  quadratic (0-1) problems. Discrete Appl Math 157(6):1255--1266,
  \doi{10.1016/j.dam.2008.01.028}

\bibitem[{Hammer(1965)}]{Hammer1965}
Hammer PL (1965) Some network flow problems solved with pseudo-boolean
  programming. Operations Research 13(3):388--399

\bibitem[{Hammer and Rudeanu(1968)}]{HammerRudeanu}
Hammer PL, Rudeanu S (1968) Boolean Methods in Operations Research and Related
  Areas, \"{O}konometrie und Unternehmensforschung / Econometrics and
  Operations Research, vol~7. Springer, Berlin, Heidelberg,
  \doi{10.1007/978-3-642-85823-9}

\bibitem[{Hansen and Meyer(2009)}]{HansenMeyer2009}
Hansen P, Meyer C (2009) Improved compact linearizations for the unconstrained
  quadratic 0-1 minimization problem. Discrete Appl Math 157(6):1267--1290,
  \doi{10.1016/j.dam.2007.12.008}

\bibitem[{Koopmans and Beckmann(1957)}]{KoopmansBeckmann}
Koopmans TC, Beckmann M (1957) Assignment problems and the location of economic
  activities. Econometrica 25(1):53--76

\bibitem[{Liberti(2007)}]{Liberti2007}
Liberti L (2007) Compact linearization for binary quadratic problems. 4OR
  5(3):231--245, \doi{10.1007/s10288-006-0015-3}

\bibitem[{Mallach(2017)}]{Mallach2017}
Mallach S (2017) Compact linearization for binary quadratic problems subject to
  assignment constraints. 4OR pp 1--15, \doi{10.1007/s10288-017-0364-0}

\bibitem[{McCormick(1976)}]{McCormick1976}
McCormick GP (1976) Computability of global solutions to factorable nonconvex
  programs: Part i --- convex underestimating problems. Mathematical
  Programming 10(1):147--175, \doi{10.1007/BF01580665}

\bibitem[{Oral and Kettani(1992{\natexlab{a}})}]{OralKettani92a}
Oral M, Kettani O (1992{\natexlab{a}}) A linearization procedure for quadratic
  and cubic mixed-integer problems. Oper Res 40(1-supplement-1):S109--S116,
  \doi{10.1287/opre.40.1.S109}

\bibitem[{Oral and Kettani(1992{\natexlab{b}})}]{OralKettani92b}
Oral M, Kettani O (1992{\natexlab{b}}) Reformulating nonlinear combinatorial
  optimization problems for higher computational efficiency. European J Oper
  Res 58(2):236--249, \doi{10.1016/0377-2217(92)90210-Z}

\bibitem[{Padberg(1989)}]{Padberg1989}
Padberg M (1989) The boolean quadric polytope: Some characteristics, facets and
  relatives. Mathematical Programming 45(1):139--172, \doi{10.1007/BF01589101}

\bibitem[{Sherali and Smith(2007)}]{SheraliSmith2007}
Sherali HD, Smith JC (2007) An improved linearization strategy for zero-one
  quadratic programming problems. Optim Lett 1(1):33--47,
  \doi{10.1007/s11590-006-0019-0}

\bibitem[{Watters(1967)}]{Watters}
Watters LJ (1967) Reduction of integer polynomial programming problems to
  zero-one linear programming problems. Oper Res 15:1171--1174

\bibitem[{Zangwill(1965)}]{Zangwill}
Zangwill WI (1965) Media selection by decision programming. J Advert Res
  5:23--37

\end{thebibliography}
\end{document}